\newtheorem{theorem}{Theorem}[section]
\newtheorem{lemma}[theorem]{Lemma}
\newtheorem{proposition}{Proposition}[section]
\newtheorem{corollary}{Corollary}[section]
\theoremstyle{definition}
\newtheorem*{problem}{Problem}
\theoremstyle{remark}
\numberwithin{equation}{section}
\newcommand{\abs}[1]{\lvert#1\rvert}
\begin{document}

\title{Non-separable tree-like Banach spaces and Rosenthal's $\ell_1$-theorem}

\author{Costas Poulios}
\address{Department of Mathematics, University of Athens, 15784, Athens, Greece}
\email{k-poulios@math.uoa.gr}

\subjclass[2010]{Primary 46B25, 46B26.}

\date{}


\keywords{Non-separable tree-like Banach spaces, Rosenthal's
$\ell_1$-theorem, uncountable cardinal.}


\begin{abstract}
We introduce and investigate a class of non-separable tree-like
Banach spaces. As a consequence, we prove that we can not achieve a
satisfactory extension of Rosenthal's $\ell_1$-theorem to spaces of
the type $\ell_1(\kappa)$, for $\kappa$ an uncountable cardinal.
\end{abstract}

\maketitle

\section{Introduction}\label{sec.Intr.}
Rosenthal's $\ell_1$-theorem \cite{Rosenthal} is one of the most
remarkable results in Banach space geometry. It provides a
fundamental criterion for the embedding of $\ell_1$ into Banach
spaces.

\begin{theorem}[Rosenthal's $\ell_1$-theorem]\label{th.Rosenthal}
Let $(x_n)$ be a bounded sequence in the Banach space $X$ and
suppose that $(x_n)$ has no weakly Cauchy subsequence. Then $(x_n)$
contains a subsequence equivalent to the usual $\ell_1$-basis.
\end{theorem}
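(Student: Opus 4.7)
The plan is to reduce the theorem to a combinatorial dichotomy for uniformly bounded sequences of real functions, and then to extract an $\ell_1$-basis from one side of that dichotomy. Passing to $Y = \overline{\mathrm{span}}\{x_n\}$, I may assume $X$ is separable, so $K := (B_{X^*}, w^*)$ is compact metrizable. Each $x_n$ gives a continuous function $\hat{x}_n \in C(K)$ via $\hat{x}_n(f) = f(x_n)$, and $(x_n)$ has a weakly Cauchy subsequence in $X$ if and only if $(\hat{x}_n)$ has a pointwise convergent subsequence on $K$. Hence it suffices to prove: if a uniformly bounded sequence of functions on $K$ has no pointwise convergent subsequence, then some subsequence of $(x_n)$ is equivalent to the $\ell_1$-basis.

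The heart of the argument is the following combinatorial dichotomy (Rosenthal's lemma). I would show that under the failure of pointwise convergence, there exist rationals $a < b$ and a subsequence $(n_k)$ such that the pairs of sets
\[
A_k = \{f \in K : \hat{x}_{n_k}(f) \leq a\}, \qquad B_k = \{f \in K : \hat{x}_{n_k}(f) \geq b\},
\]
are \emph{Boolean independent}: for every pair of disjoint finite $F, G \subset \mathbb{N}$,
\[
\Bigl(\bigcap_{k \in F} A_k\Bigr) \cap \Bigl(\bigcap_{k \in G} B_k\Bigr) \neq \emptyset.
\]
The proof is by contradiction: if no such rationals and subsequence exist, then an iterated combinatorial extraction (via a tree-pruning argument, or equivalently via the infinite Ramsey theorem applied to a suitable coloring of pairs of finite subsets), diagonalized over all rational pairs $(a,b)$, yields a subsequence along which $(\hat{x}_{n_k}(f))$ is Cauchy at every $f \in K$ --- that is, a pointwise convergent subsequence, contradicting the hypothesis.

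Given Boolean independence, the $\ell_1$-equivalence follows directly. For scalars $(c_k)$, set $F^+ = \{k : c_k > 0\}$, $F^- = \{k : c_k < 0\}$, and choose $f \in \bigcap_{k \in F^+} B_k \cap \bigcap_{k \in F^-} A_k$ and $g \in \bigcap_{k \in F^+} A_k \cap \bigcap_{k \in F^-} B_k$. Then $\hat{x}_{n_k}(f) - \hat{x}_{n_k}(g) \geq b - a$ for $k \in F^+$, while $\hat{x}_{n_k}(f) - \hat{x}_{n_k}(g) \leq a - b$ for $k \in F^-$, so
\[
(f - g)\Bigl(\sum_k c_k x_{n_k}\Bigr) = \sum_k c_k \bigl(\hat{x}_{n_k}(f) - \hat{x}_{n_k}(g)\bigr) \geq (b-a)\sum_k |c_k|.
\]
Since $\|f - g\| \leq 2$, this yields $\|\sum_k c_k x_{n_k}\| \geq \tfrac{b-a}{2}\sum_k |c_k|$, while the matching upper bound is immediate from the triangle inequality.

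The principal obstacle is the combinatorial dichotomy itself. The subtlety lies in the diagonalization over the countably many rational thresholds: one must arrange the successive extractions so that the failure of Boolean independence at every fixed $(a,b)$ can be combined into a single subsequence which is pointwise Cauchy uniformly across all rational pairs. This requires either a carefully managed recursive tree construction or a Ramsey-theoretic coloring argument strong enough to handle all thresholds simultaneously.
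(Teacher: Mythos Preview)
The paper does not give its own proof of this theorem: Theorem~\ref{th.Rosenthal} is stated in the introduction as background, with a citation to Rosenthal's original paper \cite{Rosenthal}, and the paper's actual contributions begin in Section~\ref{sec.Construction}. So there is nothing in the paper to compare your attempt against.

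That said, your outline is the standard proof, essentially Rosenthal's original argument as streamlined in later expositions. The reduction to $C(K)$ with $K = (B_{X^*}, w^*)$ compact metrizable is correct, and the derivation of the $\ell_1$-lower bound from Boolean independence of the level sets is the right computation. You correctly identify the combinatorial dichotomy as the crux; your description of it as a diagonalization over rational pairs $(a,b)$, combined with a Ramsey-type extraction at each pair, is accurate as a plan, though of course the actual execution (showing that failure of independence at every threshold can be assembled into a single pointwise-Cauchy subsequence) is where all the work lies, and you have deferred rather than carried it out. As a proof \emph{proposal} this is fine; as a proof it is incomplete precisely at the step you flag.
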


A satisfactory extension of Theorem \ref{th.Rosenthal} to spaces of
the type $\ell_1(\kappa)$, for $\kappa$ an uncountable cardinal,
would be desirable, since it would provide a useful criterion for
the embedding of $\ell_1(\kappa)$ into Banach spaces. Naturally,
therefore, R. G. Haydon \cite{Haydon2}  posed the following problem:
Let $\kappa$ be an uncountable cardinal. Suppose that $X$ is a
Banach space, $A$ is a bounded subset of $X$ whose cardinality is
equal to $\kappa$ and such that $A$ does not contain any weakly
Cauchy sequence. Can we deduce that $A$ has a subset equivalent to
the usual $\ell_1(\kappa)$-basis?

Before the question was posed, Haydon \cite{Haydon1} had already
presented a counterexample for the case where the cardinal $\kappa$
is equal to $\omega_1$. A completely different counterexample for
the case of $\omega_1$ had also been obtained by J. Hagler
\cite{Hagler}. Finally, the complete solution to the aforementioned
problem was given by C. Gryllakis \cite{Gryll} who proved that the
answer is always negative with only one exception, namely when both
$\kappa$ and $cf(\kappa)$ are strong limit cardinals.

In this paper, we first introduce for any infinite cardinal $\kappa$
a tree-like Banach space $X_\kappa$. Our construction is motivated
by the well-known James Tree space ($JT$) \cite{James} and Hagler
Tree space ($HT$) \cite{Hagler}. We also study in detail various
properties of the space $X_\kappa$ and we mostly focus on a family
of continuous functionals defined on $X_\kappa$. As a consequence of
our investigation we give a very simple answer to Haydon's problem.

Closing this introductory section, we recall some definitions for
the sake of completeness. A sequence $(x_n)_{n\in\mathbb{N}}$ in a
Banach space $X$ is \emph{weakly Cauchy} if the scalar sequence
$(f(x_n))_{n\in\mathbb{N}}$ converges for every $f$ in $X^\ast$. A
subset $A\subset X$ with cardinality $\kappa$ is \emph{equivalent to
the usual $\ell_1(\kappa)$-basis} if there are constants $C_1,C_2>0$
such that $C_1\sum_{i=1}^n\abs{a_i}\le \|\sum_{i=1}^n a_i x_i\| \le
C_2 \sum_{i=1}^n \abs{a_i}$, for any $n\in\mathbb{N}$, any
$x_1,x_2,\ldots,x_n\in A$ and any scalars $a_1,\ldots,a_n$.

Finally, we should mention that this is not the first time
non-separable tree-like Banach spaces have been defined (e.g. see
\cite{Brack} and \cite{Hagler-Odell}).

\section{The basic construction} \label{sec.Construction}
Suppose that $\kappa$ is an infinite cardinal. Then we set
\begin{align*}
    \Gamma=\{0,1\}^\kappa &= \Big\{a:\{\xi<\kappa\} \to \{0,1\}\Big\}  = \Big\{ (a_\xi)_{\xi<\kappa} \mid a_\xi=0~ \text{or}~ 1\Big\}
    \\
    \mathcal{D} = \{0,1\}^{<\kappa} &= \bigcup\Big\{\{0,1\}^\eta \mid \text{Ord}(\eta),
    \eta<\kappa\Big\} \\
    &= \Big\{ (a_\xi)_{\xi<\eta} \mid \eta ~ \text{is an ordinal},~ \eta<\kappa, ~ a_\xi=0~\text{or}~1\Big\}.
\end{align*}
The set $\mathcal{D}$ is called the \emph{(standard) tree}. The
elements $s\in\mathcal{D}$ are called \emph{nodes}. The elements of
the set $\Gamma=\{0,1\}^\kappa$ are called \emph{branches}.

If $s$ is a node and $s\in\{0,1\}^\eta$, we say that $s$ is on the
\emph{$\eta$-th level of $\mathcal{D}$}. We denote the level of $s$
by $lev(s)$. The \emph{initial segment partial ordering} on
$\mathcal{D}$, denoted by $\le$, is defined as follows: if
$s=(a_\xi)_{\xi<\eta_1}$ and $s'=(b_\xi)_{\xi<\eta_2}$ belong to
$\mathcal{D}$ then $s\le s'$ if and only if $\eta_1\le \eta_2$ and
$a_\xi=b_\xi$ for any $\xi<\eta_1$. We also write $s<s'$ if $s\le
s'$ and $s\neq s'$. By $s\bot s'$ we mean that $s,s'$ are
\emph{incomparable}, that is neither $s\le s'$ nor $s'\le s$. If
$s\le s'$ we say $s'$ is a \emph{follower} of $s$. Further, the
nodes $s\cup\{0\}$ and $s\cup\{1\}$ are called the \emph{successors}
of $s$, that is we reserve the word successor as meaning immediate
follower. However, we observe that a node does not need to have an
\emph{immediate predecessor}.

A subset $T$ of $\mathcal{D}$ is called a \emph{subtree} if it is
order isomorphic to $\{0,1\}^{<\lambda}$ for some cardinal
$\lambda\le \kappa$. In this paper, we only use countable subtrees
of $\mathcal{D}$, that is subtrees order isomorphic to
$\{0,1\}^{<\aleph_0}$. In the case $T$ is countable, we enumerate
its elements as $T=\{t_1,t_2,t_3,\ldots\}$ where $t_1$ is the
minimum element of $T$ and for each $m\in\mathbb{N}$,
$t_{2m},t_{2m+1}$ are the successors (on the tree $T$) of $t_m$.

A linearly ordered subset $\mathcal{I}$ of $\mathcal{D}$ is called a
\emph{segment} if for every $s<t<s'$, $t$ is contained in
$\mathcal{I}$ provided that $s,s'$ belong to $\mathcal{I}$. Consider
now a non-empty segment $\mathcal{I}$. Let $\eta_1$ be the least
ordinal such that there exists a node $s\in\mathcal{D}$ with
$lev(s)=\eta_1$ and $s\in\mathcal{I}$. Suppose further that there
are an ordinal $\eta$ and a node $s'$ on the $\eta$-th level so that
$s\le s'$ for every $s\in\mathcal{I}$. Let $\eta_2$ be the least
ordinal satisfying this property. Then we say that $\mathcal{I}$ is
an $\eta_1$-$\eta_2$ segment. A segment is called \emph{initial} if
$\eta_1=0$, that is $\emptyset\in\mathcal{I}$.

We next define admissible families of segments in the sense of
Hagler \cite{Hagler}. Suppose that $\{\mathcal{I}_j\}_{j=1}^r$ is a
finite family of segments. This family is called \emph{admissible}
if the following conditions are satisfied:
\begin{enumerate}
    \item there exist ordinals $\eta_1<\eta_2$ such that
    $\mathcal{I}_j$ is an $\eta_1$-$\eta_2$ segment for each
    $j=1,\ldots,r$;
    \item $\mathcal{I}_i\cap\mathcal{I}_j=\emptyset$ provided that
    $i\neq j$.
\end{enumerate}

Consider now the vector space $c_{00}(\mathcal{D})$ of finitely
supported functions $x:\mathcal{D}\to\mathbb{R}$. For any segment
$\mathcal{I}$ of $\mathcal{D}$, we set
$\mathcal{I}^\ast:c_{00}(\mathcal{D})\to\mathbb{R}$ with
$\mathcal{I}^\ast(x)=\sum_{s\in\mathcal{I}}x(s)$. Then, for any
$x\in c_{00}(\mathcal{D})$, we define the norm
$$\|x\|=\sup\Big[\sum_{j=1}^r
\abs{\mathcal{I}_j^\ast(x)}^2\Big]^{1/2}$$ where the supremum is
taken over all finite, admissible families
$\{\mathcal{I}_j\}_{j=1}^r$ of segments. The space $X_\kappa$ is the
completion of the normed space $(c_{00}(\mathcal{D}),\|.\|)$ we have
just defined.

For every node $s\in\mathcal{D}$, we define
$e_s:\mathcal{D}\to\mathbb{R}$ with $e_s(t)=1$ if $t=s$ and
$e_s(t)=0$ otherwise. Clearly, $\|e_s\|=1$ for any
$s\in\mathcal{D}$.

We come now to the final definition. Suppose that $\{s_i\mid i\in
I\}$ is a family of nodes of the tree $\mathcal{D}$. This family is
called \emph{strongly incomparable} (see \cite{Hagler}) if the
following hold:
\begin{enumerate}
    \item $s_i\bot s_j$ provided that $i\neq j$;
    \item if $\{S_1,\ldots,S_r\}$ is any admissible family of
    segments, then at most two nodes of the $s_i$'s, $i\in I$, are
    contained in $S_1\cup\ldots\cup S_r$.
\end{enumerate}
There is a standard way for constructing strongly incomparable
families of nodes. Suppose that $(s_\xi)_{\xi<\eta}$ is a set of
nodes, where $\eta<\kappa$, such that $s_0<s_1<\ldots$. For any
ordinal $\xi<\eta$, let $t_\xi$ be the successor of $s_\xi$ with
$t_\xi\bot s_{\xi+1}$. Then, the family $\{t_\xi \mid \xi<\eta\}$ is
strongly incomparable.

Concerning strongly incomparable sets of nodes, we quote the
following proposition whose proof is straightforward.

\begin{proposition}\label{prop.c0}
Suppose that $\{s_i\mid i\in I\}$ is a strongly incomparable set of
nodes on the tree $\mathcal{D}$. Then the family $\{e_{s_i} \mid
i\in I\}$ is equivalent to the usual basis of $c_0(I)$. More
precisely, for any $n\in\mathbb{N}$, any $i_1,\ldots,i_n \in I$ and
any scalars $a_1,\ldots,a_n$, we have
$$\max_{1\le k\le n}\abs{a_k} \le \Big\|\sum_{k=1}^n a_k e_{s_{i_k}}
\Big\| \le \sqrt{2} \max_{1\le k\le n}\abs{a_k}.$$
\end{proposition}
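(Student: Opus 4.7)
The plan is to verify both inequalities separately. For each finite selection of indices $i_1,\ldots,i_n\in I$ and scalars $a_1,\ldots,a_n$, set $x=\sum_{k=1}^n a_k e_{s_{i_k}}$.

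For the lower bound, I would fix any $k$ and observe that the singleton $\{s_{i_k}\}$ is a segment (it is an $\eta$-$(\eta+1)$ segment, where $\eta=lev(s_{i_k})$). The family consisting of this single segment is therefore admissible, so the definition of the norm gives $\|x\|\ge\bigl|\{s_{i_k}\}^\ast(x)\bigr|=|a_k|$. Taking the maximum over $k$ yields $\max_k|a_k|\le\|x\|$.

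For the upper bound, let $\{\mathcal{I}_1,\ldots,\mathcal{I}_r\}$ be an arbitrary admissible family. I want to bound $\sum_{j=1}^r|\mathcal{I}_j^\ast(x)|^2$. Since the $\mathcal{I}_j$ are pairwise disjoint, every node $s_{i_k}$ lies in at most one segment of the family. By condition (2) of strong incomparability, at most two of the nodes $s_{i_1},\ldots,s_{i_n}$ lie in $\mathcal{I}_1\cup\cdots\cup\mathcal{I}_r$ at all; call them (when they exist) $s_{i_{k_1}}$ and $s_{i_{k_2}}$. The key observation is that these two nodes cannot lie in the same segment: a segment is by definition linearly ordered, whereas $s_{i_{k_1}}\bot s_{i_{k_2}}$ by condition (1) of strong incomparability. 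Hence $s_{i_{k_1}}$ and $s_{i_{k_2}}$ belong to two distinct segments $\mathcal{I}_{j_1}$, $\mathcal{I}_{j_2}$, and all other $\mathcal{I}_j^\ast(x)$ vanish. Thus
$$\sum_{j=1}^r|\mathcal{I}_j^\ast(x)|^2=|a_{k_1}|^2+|a_{k_2}|^2\le 2\max_k|a_k|^2,$$
with obvious modifications if fewer than two nodes are covered. Taking the square root and then the supremum over admissible families gives $\|x\|\le\sqrt{2}\max_k|a_k|$.

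There is no real obstacle here: the only subtle point is the remark that incomparable nodes cannot share a segment, which is what prevents the naive bound $2\max_k|a_k|$ from being forced and produces the sharper $\sqrt{2}$. Everything else is a direct reading of the definitions of segment, admissible family, and strongly incomparable set.
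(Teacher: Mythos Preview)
Your proof is correct and is exactly the straightforward verification the paper has in mind; the paper itself omits the argument entirely, merely stating that the proof is straightforward. The only point worth a second glance is whether a singleton $\{s_{i_k}\}$ literally qualifies as an $\eta$--$(\eta+1)$ segment under the paper's wording of $\eta_2$, but this is a harmless ambiguity: since the $s_{i_j}$ are pairwise incomparable, any segment through $s_{i_k}$ (for instance the two-element segment $\{s_{i_k}, s_{i_k}\cup\{0\}\}$) still picks up only the coefficient $a_k$, so the lower bound goes through unchanged.
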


\section{The main results}\label{sec.Main Results}
Suppose that $B=(a_\xi)_{\xi<\kappa}\in\Gamma$ is any branch. Then
$B$ can be naturally identified with a maximal segment of
$\mathcal{D}$, namely $B=\{s_0<s_1<\ldots<s_\eta<\ldots\}$ where
$s_0=\emptyset$ and $s_\eta=(a_\xi)_{\xi<\eta}$ for any ordinal
$\eta<\kappa$. In Section \ref{sec.Construction}, we defined the
linear functional $B^\ast:c_{00}(\mathcal{D})\to\mathbb{R}$ by
setting $B^\ast(x)=\sum_{s\in B}x(s)$. Clearly, $\|B^\ast\|=1$. This
functional can be extended to a bounded functional on $X_\kappa$,
having the same norm and which is denoted again by $B^\ast$. Let
also $\Gamma^\ast$ denote the set which contains the functionals
$B^\ast$ defined above. Then $\Gamma^\ast$ is a bounded subset of
$X_\kappa^\ast$ whose cardinality is equal to $2^\kappa$.

This section is devoted to the study of the family $\Gamma^\ast$.
Towards this direction, we first prove the following.

\begin{theorem}\label{th.no weakly cauchy}
Suppose that $(B_n)_{n\in\mathbb{N}}$ is a sequence of branches such
that $B_n\neq B_m$ for $n\neq m$. Then $(B_n^\ast)_{n\in\mathbb{N}}$
contains a subsequence equivalent to the usual $\ell_1$-basis.
\end{theorem}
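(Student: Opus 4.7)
The plan is to extract a subsequence $(B_{n_k})$ together with a chain $s_0<s_1<\ldots$ in $\mathcal{D}$ so that the off-chain successors $t_k$, defined as in the standard construction recalled in Section~\ref{sec.Construction}, satisfy $t_k\in B_{n_{k+1}}$. Strong incomparability of $\{t_k\}$ combined with Proposition~\ref{prop.c0} will then yield the $\ell_1$-lower bound via a direct duality computation.

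The extraction itself is by induction. Starting with $N^{(0)}=\mathbb{N}$, at stage $k$ the branches indexed by the infinite set $N^{(k)}$ are pairwise distinct, so there is a least ordinal $\gamma_k<\kappa$ at which they disagree on their $\gamma_k$-th bit; let $s_k$ be the common node at level $\gamma_k$. Splitting $N^{(k)}$ by the $\gamma_k$-th bit into two non-empty subsets, I take $N^{(k+1)}$ to be the infinite side (say bit $0$), choose $n_{k+1}$ from the other side, and set $t_k:=s_k\cup\{1\}$. Then $t_k\in B_{n_{k+1}}$, and since $s_{k+1}$ is forced to extend $s_k\cup\{0\}$ we also have $t_k\bot s_{k+1}$. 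The set $N^{(k+1)}$ is again infinite, so the recursion runs for all $k\in\mathbb{N}$; the resulting chain meets the hypothesis of the standard construction, so $\{t_k\mid k\in\mathbb{N}\}$ is strongly incomparable.

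It remains to extract the $\ell_1$-estimate. Given scalars $a_1,\ldots,a_K$, set $\varepsilon_k=\mathrm{sgn}(a_k)$ and $x=\frac{1}{\sqrt{2}}\sum_{k=1}^{K}\varepsilon_k\,e_{t_{k-1}}$; by Proposition~\ref{prop.c0} one has $\|x\|\le 1$. For each $k$, the functional $B_{n_k}^\ast$ picks out $\delta_{jk}$ from $e_{t_{j-1}}$: if $j=k$ this holds by construction, and for $j\ne k$ having both $t_{j-1}$ and $t_{k-1}$ on the chain $B_{n_k}$ would force them to be comparable, contradicting pairwise incomparability. Consequently $\langle\sum_{k=1}^{K} a_k B_{n_k}^\ast,\,x\rangle=\frac{1}{\sqrt{2}}\sum_{k=1}^{K}|a_k|$, which yields $\|\sum a_k B_{n_k}^\ast\|\ge\frac{1}{\sqrt{2}}\sum|a_k|$; the reverse inequality $\|\sum a_k B_{n_k}^\ast\|\le\sum|a_k|$ is immediate from $\|B_{n_k}^\ast\|=1$.

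I expect the main subtlety to lie in the inductive step of the extraction. It is essential to take $s_k$ at the exact level of first disagreement rather than merely as an immediate successor of $s_{k-1}$: otherwise the branch $B_{n_{k+1}}$ could still follow the chain's immediate successor of $s_k$ and peel away only at a higher level, which would break the identity $t_k\in B_{n_{k+1}}$ and destroy the $\delta_{jk}$ pattern underlying the duality argument.
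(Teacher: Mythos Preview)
Your proof is correct and follows essentially the same approach as the paper's: both extract a chain $s_0<s_1<\cdots$ by repeatedly locating the first level of disagreement among the surviving branches, peel off one branch through the ``other'' successor $t_k$, and then exploit strong incomparability of $\{t_k\}$ together with Proposition~\ref{prop.c0} to get the $\tfrac{1}{\sqrt{2}}$ lower bound via the test vector $\sum \mathrm{sgn}(a_k)e_{t_{k-1}}$. The only cosmetic omission is that you do not explicitly arrange $n_1<n_2<\cdots$ (the paper does this by restricting to indices $>l_k$ at each step), but this is trivially repaired.
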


\begin{proof}
Consider the set $\mathcal{A}$ consisting of all ordinals
$\eta<\kappa$ which satisfy the following: there are nodes
$\varphi\neq t$ with $lev(\varphi)=lev(t)=\eta$ and there are
positive integers $m_1\neq m_2$ such that $\varphi\in B_{m_1}$,
$t\in B_{m_2}$. Clearly $\mathcal{A}$ is a non-empty set, therefore
we can consider its least element, say $\eta$. Then $\eta$ can not
be a limit ordinal. Indeed, let $\varphi=(a_\xi)_{\xi<\eta}$ and
$t=(b_\xi)_{\xi<\eta}$ be as above. Since $\varphi\neq t$, there
exists $\eta_1<\eta$ with $a_{\eta_1}\neq b_{\eta_1}$. We set
$\tilde{\varphi}=(a_\xi)_{\xi<\eta_1+1}$ and
$\tilde{t}=(b_\xi)_{\xi<\eta_1+1}$. Now we observe that
$\tilde{\varphi}\neq \tilde{t}$, these nodes are placed on the same
level and $\tilde{\varphi}\le \varphi$, $\tilde{t}\le t$. Hence,
$\tilde{\varphi}\in B_{m_1}$, $\tilde{t}\in B_{m_2}$. By the
minimality of $\eta$, we conclude that $\eta=\eta_1+1$.

Furthermore, the minimality of $\eta$ also implies that there exists
a node $s_1$ on the level $\eta_1$, so that $s_1\in B_m$, for every
$m\in\mathbb{N}$, and the nodes $\varphi$, $t$ on the level
$\eta=\eta_1+1$ are precisely the successors of $s_1$. Now, we set
$\varphi_1=\varphi$ and $t_1=t$. We may assume that there are
infinitely many terms of the sequence $(B_m)_{m\in\mathbb{N}}$ which
pass through the node $\varphi_1$. Then we choose a branch $B_{l_1}$
passing through the node $t_1$ (clearly such a branch does exist).
$B_{l_1}$ is just the first term of the desired subsequence.

We next set $N_1=\{m\in\mathbb{N} \mid m>l_1 ~\text{and}~
\varphi_1\in B_m\}$. Then $N_1$ is an infinite subset of
$\mathbb{N}$. Repeating the previous argument to the branches
$(B_m)_{m\in N_1}$, we find an ordinal $\eta_2>\eta_1+1$ and a node
$s_2$ on the $\eta_2$-th level with successors $\varphi_2$ and
$t_2$, such that
\begin{itemize}
    \item all branches $B_m$, $m\in N_1$, pass through the node
    $s_2$;
    \item infinitely many branches of the sequence $(B_m)_{m\in
    N_1}$ pass through $\varphi_2$ and the set $\{m\in N_1 \mid t_2\in
    B_m\}$ is non-empty.
\end{itemize}
We also choose a branch $B_{l_2}$ so that $t_2\in B_{l_2}$.

Continue in the obvious manner. We inductively construct a sequence
$s_1<s_2<\ldots$ of nodes of $\mathcal{D}$, with the successors of
$s_i$ denoted by $\varphi_i$ and $t_i$, and a sequence
$l_1<l_2<\ldots$ of positive integers such that the following hold:
\begin{enumerate}
    \item $s_1<\varphi_1\le s_2<\varphi_2\le s_3\ldots$;
    \item $s_i\in B_{l_j}$ for any $j\ge i$, however the branches
    $B_{l_j}$, $j>i$, pass through the node $\varphi_i$ while the
    branch $B_{l_i}$ passes through the node $t_i$.
\end{enumerate}

We prove now that the sequence $(B_{l_m}^\ast)_{m\in\mathbb{N}}$ is
equivalent to the usual $\ell_1$-basis. Let $M\in\mathbb{N}$ and
$a_1,\ldots,a_M\in\mathbb{R}$ be given. We set $x=\sum_{i=1}^M
sgn(a_i) e_{t_i}$. Condition $(1)$ of the above construction implies
that the sequence $(t_i)$ is strongly incomparable. Hence by
Proposition \ref{prop.c0}, we have $\|x\|=\sqrt{2}$. Furthermore,
condition $(2)$ implies that $t_i\in B_{l_i}\setminus
\cup\{B_{l_j}\mid j\neq i\}$, thus $B_{l_j}(e_{t_i})=\delta_{ij}$.
Therefore:
$$
    \Big\|\sum_{i=1}^M a_i B_{l_i}^\ast\Big\| \geq \frac{1}{\|x\|}
    \Big|\sum_{i=1}^M a_i B_{l_i}^\ast(x)\Big| = \frac{1}{\sqrt{2}}
    \Big|\sum_{i=1}^M a_i sgn(a_i)\Big|\\
    =\frac{1}{\sqrt{2}} \sum_{i=1}^M \abs{a_i}.
$$
Clearly, we have $\|\sum_{i=1}^M a_i B_{l_i}^\ast\|\le \sum_{i=1}^M
\abs{a_i}$ and the proof is complete.
\end{proof}

\begin{corollary}\label{cor.no weakly cauchy}
The set $\Gamma^\ast$ contains no weakly Cauchy sequence.
\end{corollary}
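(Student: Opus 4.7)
The strategy is to derive the corollary as a short consequence of Theorem~\ref{th.no weakly cauchy} combined with the standard fact that a sequence equivalent to the unit vector basis of $\ell_1$ is never weakly Cauchy. I would argue by contradiction.

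First, I would suppose that $\Gamma^\ast$ contains a weakly Cauchy sequence. Distinct branches $B\neq B'$ produce distinct functionals, because any node $s$ lying on $B$ but not on $B'$ witnesses $B^\ast(e_s)=1\neq 0=(B')^\ast(e_s)$; hence such a sequence may be enumerated as $(B_n^\ast)_{n\in\mathbb{N}}$ with the branches $B_n$ pairwise distinct. Theorem~\ref{th.no weakly cauchy} then delivers a subsequence $(B_{n_k}^\ast)_k$ equivalent to the usual $\ell_1$-basis, and this subsequence is still weakly Cauchy in $X_\kappa^\ast$.

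To close the argument I would invoke the fact that no sequence equivalent to the $\ell_1$-basis can be weakly Cauchy. The equivalence gives a Banach space isomorphism $T:\ell_1\to \overline{\mathrm{span}}\{B_{n_k}^\ast : k\in\mathbb{N}\}$ with $T(e_k)=B_{n_k}^\ast$. By Hahn--Banach, every functional on the range of $T$ extends to an element of $X_\kappa^{\ast\ast}$, so the weak Cauchyness of $(B_{n_k}^\ast)$ transfers through $T^{-1}$ to weak Cauchyness of the basis $(e_k)$ in $\ell_1$. But $\ell_1$ has the Schur property and is weakly sequentially complete, so any weakly Cauchy sequence there is norm Cauchy, which is absurd in view of $\|e_k-e_{k+1}\|_{\ell_1}=2$.

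I do not expect a genuine obstacle here; the only point requiring a word of care is the interpretation of ``contains no weakly Cauchy sequence'', which must be read (as in Haydon's problem quoted in Section~\ref{sec.Intr.}) as referring to infinite sequences of \emph{distinct} points, for otherwise any singleton would trivially produce a constant weakly Cauchy sequence. With that convention in place, the argument is a routine combination of Theorem~\ref{th.no weakly cauchy} and classical $\ell_1$-theory.
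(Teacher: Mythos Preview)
Your argument is correct and is precisely the intended one: the paper states the corollary without proof, as it is meant to be an immediate consequence of Theorem~\ref{th.no weakly cauchy} together with the classical fact that a sequence equivalent to the $\ell_1$-basis cannot be weakly Cauchy. Your remark on the convention that ``sequence'' here means a sequence of distinct points is also appropriate and matches the formulation of Haydon's problem in Section~\ref{sec.Intr.}.
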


We pass now to the second result concerning the set of functionals
$\{B^\ast \mid B\in\Gamma\}$.

\begin{theorem}\label{th.basiko}
There exists no subset of $\Gamma^\ast$ which is equivalent to the
usual $\ell_1(\kappa^+)$-basis.
\end{theorem}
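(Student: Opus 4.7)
The plan is to argue by contradiction. Assume $\{B_{\alpha}^{*}\}_{\alpha<\kappa^{+}}$ is equivalent to the usual $\ell_{1}(\kappa^{+})$-basis with lower constant $C_{1} > 0$. The key observation is that the compact Hausdorff space $\Gamma = \{0,1\}^{\kappa}$, equipped with the product topology, has topological weight exactly $\kappa$. Consequently the $\kappa^{+}$-sized set $\{B_{\alpha}:\alpha<\kappa^{+}\}$ admits a subset $\{B_{\beta}:\beta\in S\}$ of cardinality $\leq \kappa$ that is dense in its closure. Since $|S|<\kappa^{+}$, I fix $\alpha^{*}\in\kappa^{+}\setminus S$, and density provides a net $(\alpha_{\lambda})$ in $S$ with $B_{\alpha_{\lambda}}\to B_{\alpha^{*}}$ in the product topology.

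I would next show that the assignment $B\mapsto B^{*}$ is continuous from $(\Gamma,\text{product})$ to $(X_{\kappa}^{*},w^{*})$. Indeed, for any $x\in c_{00}(\mathcal{D})$ with finite support $E$, the value $B^{*}(x) = \sum_{s\in B\cap E}x(s)$ depends only on the finite set $B\cap E$; once $B_{\alpha_{\lambda}}$ agrees with $B_{\alpha^{*}}$ on $E$---which happens eventually, since $E$ is finite---we have $B_{\alpha_{\lambda}}^{*}(x) = B_{\alpha^{*}}^{*}(x)$ exactly. Thus $B_{\alpha_{\lambda}}^{*}\to B_{\alpha^{*}}^{*}$ in $w^{*}$, with eventual equality on every fixed vector of $c_{00}(\mathcal{D})$.

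The contradiction arises from a quantitative comparison. Applying the $\ell_{1}$-equivalence to $\{B_{\alpha_{\lambda_{1}}}^{*},\ldots,B_{\alpha_{\lambda_{N}}}^{*},B_{\alpha^{*}}^{*}\}$ with coefficients $(1,\ldots,1,-N)$ yields
\[
\Big\|\sum_{i=1}^{N} B_{\alpha_{\lambda_{i}}}^{*} - N B_{\alpha^{*}}^{*}\Big\| \geq 2C_{1}N.
\]
On the other hand, by refining the choice of indices $\lambda_{i}$ so that the symmetric differences $B_{\alpha_{\lambda_{i}}}\triangle B_{\alpha^{*}}$ are concentrated at successively deeper, mutually disjoint level ranges of $\mathcal{D}$, the admissible-family structure of the norm on $X_{\kappa}$ forces any admissible family of segments to absorb contributions from only a bounded number of these differences. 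Working the estimate through gives a matching upper bound of order $o(N)$, contradicting the lower bound for $N$ sufficiently large.

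The hard part will be this final quantitative step---converting pointwise $w^{*}$-convergence into a uniform norm bound. It demands a diagonal selection within the dense subfamily $\{B_{\beta}:\beta\in S\}$ that produces indices $\alpha_{\lambda_{i}}$ whose symmetric differences with $B_{\alpha^{*}}$ are genuinely separated in the tree, so that the $\ell_{2}$-type behaviour encoded in the admissible-family norm yields an upper bound of order $\sqrt{N}$ rather than the linear order needed for $\ell_{1}(\kappa^{+})$-equivalence.
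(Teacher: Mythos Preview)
Your continuity claim is false, and this breaks the argument. For a node $s$ at level $\eta$, the set $\{B\in\Gamma: s\in B\}=\bigcap_{\xi<\eta}\{B:b_\xi=a_\xi\}$ is closed in the product topology on $\{0,1\}^\kappa$, but it is \emph{not open} once $\eta$ is infinite, since basic open sets constrain only finitely many coordinates. Hence $B\mapsto B^*(e_s)=\mathbf{1}_{\{s\in B\}}$ is discontinuous, and the map $B\mapsto B^*$ is not product-to-$w^*$ continuous. Concretely, a net $B_{\alpha_\lambda}\to B_{\alpha^*}$ in the product topology need not ever satisfy $s\in B_{\alpha_\lambda}$ for a fixed $s\in B_{\alpha^*}$ at infinite level; your assertion that agreement on a finite support $E$ ``happens eventually, since $E$ is finite'' confuses finiteness of $E\subset\mathcal{D}$ with finiteness of the coordinate set determining membership of each $s\in E$.

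This is not a technicality you can patch: density of $\{B_\beta:\beta\in S\}$ in the product topology only tells you that some $B_\beta$ matches $B_{\alpha^*}$ on any prescribed \emph{finite} set of coordinates, whereas what your ``hard part'' needs is pairs of branches from the family that share a long \emph{initial segment} in $\mathcal{D}$ and then split. That is genuinely tree-combinatorial information, and the paper obtains it not from topology but from a pigeonhole lemma: in any $\kappa^+$-sized family of branches there is a node both of whose successors still carry $\kappa^+$ branches of the family. Iterating this produces a countable subtree along which one can select, for every $n$, branches $B_1,\dots,B_n,B_{n+1},\dots,B_{2n}$ with $B_i$ and $B_{n+i}$ coinciding up to a common level and all $2n$ branches disjoint beyond another level; testing $\sum_{i\le n}B_i^*-\sum_{i>n}B_i^*$ then gives the $O(\sqrt{n})$ bound you were aiming for. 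Your outline correctly identifies the target estimate, but the mechanism you propose for reaching it does not work.
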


For the proof of the above theorem we need to establish some lemmas.
Before proceeding, let us introduce some notation. First of all, if
$A$ is any set, then $|A|$ denotes the cardinality of $A$. Suppose
now that $\Delta\subseteq \Gamma$ is a set of branches. For any node
$s\in\mathcal{D}$, we denote $\Delta_s$ the set of all branches
$B\in \Delta$ passing through $s$, that is
$\Delta_s=\{B\in\Delta\mid s\in B\}$. We also set
$\Delta_s^c=\Delta\setminus \Delta_s=\{B\in\Delta\mid s \notin B\}$.

\begin{lemma}\label{lemma1}
Let $\Delta\subseteq \Gamma$ be a set of branches with
$|\Delta|=\kappa^+$. Then there exists a node $s\in\mathcal{D}$ such
that $|\Delta_{s\cup\{0\}}|=|\Delta_{s\cup\{1\}}|=\kappa^+$
\end{lemma}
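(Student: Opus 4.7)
The plan is to argue by contradiction. I will assume that for every $s\in\mathcal{D}$, at most one of the successors $s\cup\{0\},s\cup\{1\}$ satisfies $|\Delta_{s\cup\{i\}}|=\kappa^+$. Since every branch through $s$ extends through exactly one of the two successors, one has $\Delta_s=\Delta_{s\cup\{0\}}\cup\Delta_{s\cup\{1\}}$; hence, whenever $|\Delta_s|=\kappa^+$, exactly one successor of $s$ carries $\kappa^+$ branches of $\Delta$ while the other carries at most $\kappa$.

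Using this alternative, I will construct by transfinite recursion a strictly increasing sequence of nodes $(s_\alpha)_{\alpha<\kappa}$ with $s_0=\emptyset$ and $|\Delta_{s_\alpha}|=\kappa^+$ at every stage. At a successor stage I let $s_{\alpha+1}$ be the unique successor of $s_\alpha$ carrying $\kappa^+$ branches; at a limit stage $\alpha<\kappa$ I put $s_\alpha=\bigcup_{\beta<\alpha}s_\beta\in\{0,1\}^\alpha$, which is again a node of $\mathcal{D}$ because $\alpha<\kappa$.

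The main technical point is to verify that $|\Delta_{s_\alpha}|=\kappa^+$ at every stage, in particular at limits. A routine transfinite induction on $\alpha$ yields
$$\Delta\setminus\Delta_{s_\alpha}=\bigcup_{\beta<\alpha}\bigl(\Delta_{s_\beta}\setminus\Delta_{s_{\beta+1}}\bigr),$$
and each set $\Delta_{s_\beta}\setminus\Delta_{s_{\beta+1}}$ consists of the branches passing through the \emph{other} (unchosen) successor of $s_\beta$, hence has cardinality at most $\kappa$. Since $|\alpha|\le\kappa$, this gives $|\Delta\setminus\Delta_{s_\alpha}|\le\kappa\cdot\kappa=\kappa$, and so $|\Delta_{s_\alpha}|=\kappa^+$.

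Once the sequence has been defined for all $\alpha<\kappa$, its union $B^\star=\bigcup_{\alpha<\kappa}s_\alpha$ is a single element of $\Gamma$, and any branch lying in every $\Delta_{s_\alpha}$ must equal $B^\star$. I then conclude
$$|\Delta|\le\Big|\bigcup_{\alpha<\kappa}(\Delta\setminus\Delta_{s_\alpha})\Big|+\Big|\bigcap_{\alpha<\kappa}\Delta_{s_\alpha}\Big|\le\kappa\cdot\kappa+1=\kappa,$$
contradicting $|\Delta|=\kappa^+$. The only genuine subtlety is the limit-stage accounting above, which rests on the cardinal identity $\kappa\cdot\kappa=\kappa$ valid for every infinite $\kappa$.
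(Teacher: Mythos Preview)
Your proof is correct and follows essentially the same approach as the paper: argue by contradiction, build a branch $(s_\alpha)_{\alpha<\kappa}$ by transfinite recursion choosing at each successor stage the child carrying $\kappa^+$ branches, handle limit stages via the decomposition $\Delta\setminus\Delta_{s_\alpha}=\bigcup_{\beta<\alpha}(\Delta_{s_\beta}\setminus\Delta_{s_{\beta+1}})$, and finally conclude that $\Delta$ is contained in the union of at most $\kappa$ sets of size $\le\kappa$ together with the singleton $\{B^\star\}$. The paper's write-up differs only cosmetically (it spells out the minimal-ordinal argument showing each branch in $\Delta_{s_\eta}^c$ leaves the chain at some successor stage, where you simply invoke a routine transfinite induction).
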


\begin{proof}
Assume that the assertion is not true. Then for every node
$s\in\mathcal{D}$ there is a successor $s\cup\{\epsilon\}$ of $s$,
where $\epsilon=0$ or $1$, such that
$|\Delta_{s\cup\{\epsilon\}}|<\kappa^+$. With this assumption and
using transfinite induction we construct a branch
$B=\{s_\eta\}_{\eta<\kappa}=\{s_0<s_1<\ldots\}$ with the property
that $|\Delta_{s_\eta}|=\kappa^+$ for any $\eta<\kappa$.

We start with $s_0=\emptyset$. Clearly,
$|\Delta_\emptyset|=|\Delta|=\kappa^+$. Suppose now that $\eta$ is
an ordinal, $\eta<\kappa$, and we have defined the nodes
$\{s_\xi\}_{\xi<\eta}$ with $lev(s_\xi)=\xi$ and
$|\Delta_{s_\xi}|=\kappa^+$ for any $\xi<\eta$.

If $\eta=\eta_0+1$, then by the inductive hypothesis we have
$|\Delta_{s_{\eta_0}}|=\kappa^+$. Clearly,
$\Delta_{s_{\eta_0}}=\Delta_{s_{\eta_0}\cup\{0\}}\cup\Delta_{s_{\eta_0}\cup\{1\}}$.
Therefore, there exists a successor $s_{\eta_0}\cup\{\epsilon\}$
(where $\epsilon=0$ or $1$) of $s_{\eta_0}$ such that
$|\Delta_{s_{\eta_0}\cup\{\epsilon\}}|=\kappa^+$. Let
$s_\eta=s_{\eta_0}\cup\{\epsilon\}$.

If $\eta$ is a limit ordinal, we set $s_\eta=\cup_{\xi<\eta}s_\xi$.
Then $s_\eta$ is a node on the $\eta$-th level of $\mathcal{D}$. It
remains to show that $|\Delta_{s_\eta}|=\kappa^+$. Since,
$\Delta=\Delta_{s_\eta}\cup \Delta_{s_\eta}^c$, it suffices to prove
that $|\Delta_{s_\eta}^c|\le \kappa$.

Let us consider a branch $B$ belonging to $\Delta_{s_\eta}^c$, that
is $s_\eta\notin B$. We also denote $S$ the initial segment
$\{s_\xi\}_{\xi\le \eta}$. We consider now the set $\mathcal{A}$
containing all ordinals $\xi\le \eta$ such that at the $\xi$-th
level of $\mathcal{D}$, the segments $B$ and $S$ do not pass through
the same node. The set $\mathcal{A}$ is non-empty as
$\eta\in\mathcal{A}$. Therefore $\mathcal{A}$ has a minimum element,
say $\xi_0$. The minimality of $\xi_0$ implies that $\xi_0$ can not
be a limit ordinal. Hence $\xi_0=\xi+1$. Further, it follows by the
minimality of $\xi_0$ that at the level $\xi$, we have $s_\xi\in B$
and $s_\xi\in S$, while at the level $\xi+1$, $s_{\xi+1}\in S$ and
$s_{\xi+1}\notin B$. Consequently,
\begin{align*}
    \Delta_{s_\eta}^c & = \cup_{\xi<\eta} \{B\in\Delta \mid
s_\xi \in B ~ \text{and}~ s_{\xi+1}\notin B\}\\
&= \cup_{\xi<\eta} (\Delta_{s_\xi} \cap \Delta_{s_{\xi+1}}^c).
\end{align*}
Observe that $s_{\xi+1}$ is a successor of $s_\xi$,
$|\Delta_{s_\xi}|=|\Delta_{s_{\xi+1}}|=\kappa^+$ and $\Delta_{s_\xi}
\cap \Delta_{s_{\xi+1}}^c$ consists of all branches $B\in\Delta$
which pass through the other successor of $s_\xi$. By our assumption
in the beginning of the proof, we have $|\Delta_{s_\xi} \cap
\Delta_{s_{\xi+1}}^c|\le \kappa$ and therefore
$|\Delta_{s_\eta}^c|\le \sum_{\xi<\eta}\kappa=\kappa$.

Therefore a branch $B=\{s_\eta\}_{\eta<\kappa}$ has been constructed
with the property $|\Delta_{s_\eta}|=\kappa^+$ for any
$\eta<\kappa$. To complete the proof of the lemma, we only need to
repeat our last argument. Consider a branch $\tilde{B}\in\Delta$
with $\tilde{B}\neq B$. Let $\xi_0$ be the minimum ordinal such that
at the $\xi_0$-th level the branches $\tilde{B},B$ do not pass
through the same node. The minimality of $\xi_0$ implies that
$\xi_0=\xi+1$, $s_\xi\in\tilde{B}$ and $s_{\xi+1}\notin \tilde{B}$.
Therefore
$$\Delta\subseteq \{B\}\cup
\Big(\cup_{\xi<\kappa}(\Delta_{s_\xi}\cap\Delta_{s_{\xi+1}}^c)\Big).$$
Since $|\Delta_{s_\xi}\cap\Delta_{s_{\xi+1}}^c|\le \kappa$, it
follows that $|\Delta|\le\kappa$ and we have reached a
contradiction.
\end{proof}

\begin{lemma}\label{lemma2}
Let $\Delta\subset\Gamma$ be a set of branches with
$|\Delta|=\kappa^+$. Then there exists a countable subtree $T$ of
$\mathcal{D}$, $T=\{t_1,t_2,t_3,\ldots\}$, such that the following
hold:
\begin{enumerate}
    \item $|\Delta_{t_m}|=\kappa^+$ for any node $t_m\in T$;
    \item for any node $t_m\in T$ there exists a node
    $s_m\in\mathcal{D}$, so that $t_m\le s_m$ and $t_{2m},t_{2m+1}$
    are the successors of $s_m$ (that is, when we look at the tree
    $\mathcal{D}$, then the successors of $t_m$ still remain the
    successors of some node $s_m\in\mathcal{D}$).
\end{enumerate}
\end{lemma}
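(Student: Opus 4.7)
The plan is to construct $T$ by transfinite recursion of length $\omega$, applying Lemma~\ref{lemma1} repeatedly to progressively smaller subfamilies of branches. Start by setting $t_1 = \emptyset$, so that $|\Delta_{t_1}| = |\Delta| = \kappa^+$. Inductively, suppose $t_m \in \mathcal{D}$ has been defined with $|\Delta_{t_m}| = \kappa^+$. Apply Lemma~\ref{lemma1} to the family $\Delta_{t_m}$ of branches: this produces a node $s_m \in \mathcal{D}$ such that both successors $s_m \cup \{0\}$ and $s_m \cup \{1\}$ satisfy $|(\Delta_{t_m})_{s_m \cup \{\epsilon\}}| = \kappa^+$. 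Declare $t_{2m} = s_m \cup \{0\}$ and $t_{2m+1} = s_m \cup \{1\}$.

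The first thing I would verify is that $s_m$ is automatically a follower of $t_m$ in $\mathcal{D}$, which is the heart of the argument. If $s_m \bot t_m$, then no branch through $t_m$ can pass through either successor of $s_m$, contradicting $|(\Delta_{t_m})_{s_m \cup \{\epsilon\}}| = \kappa^+$. If $s_m < t_m$, then exactly one of the two successors of $s_m$ lies $\le t_m$ and the other is incomparable with $t_m$, so one of the two cardinalities is zero, again a contradiction. Hence $t_m \le s_m$, which is precisely condition~(2). Condition~(1) for $t_{2m}, t_{2m+1}$ then follows: since $t_m \le t_{2m}$ and $t_m \le t_{2m+1}$, every branch through either successor of $s_m$ passes through $t_m$, so $(\Delta_{t_m})_{s_m \cup \{\epsilon\}} = \Delta_{s_m \cup \{\epsilon\}}$, which has cardinality $\kappa^+$.

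It then remains to check that the countable set $T = \{t_m\}_{m \in \mathbb{N}}$ is a subtree order-isomorphic to $\{0,1\}^{<\aleph_0}$ under the order inherited from $\mathcal{D}$. By construction each $t_{2m}, t_{2m+1}$ strictly extends $t_m$, while $t_{2m} \bot t_{2m+1}$ since they are distinct immediate successors of $s_m$. A short induction on the height of nodes in the binary indexing shows that for $n \neq m$, $t_m \le t_n$ in $\mathcal{D}$ if and only if $n$ is reached from $m$ by a finite sequence of the operations $k \mapsto 2k$ and $k \mapsto 2k+1$, which is the order relation of $\{0,1\}^{<\aleph_0}$ under its standard binary indexing.

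The main obstacle — and the only nontrivial point — is the compatibility issue in the inductive step: Lemma~\ref{lemma1} in isolation only guarantees a splitting node somewhere in $\mathcal{D}$, with no a~priori relation to the previously chosen $t_m$. The crucial observation that, when the lemma is applied to the restricted family $\Delta_{t_m}$ (which still has cardinality $\kappa^+$ by the inductive hypothesis), the splitting node $s_m$ is forced to extend $t_m$, is what glues the recursive steps together into a coherent subtree.
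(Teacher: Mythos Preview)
Your proof is correct and follows essentially the same route as the paper's: start at $t_1=\emptyset$, and at each step apply Lemma~\ref{lemma1} to $\Delta_{t_m}$ to obtain a splitting node $s_m$, then take $t_{2m},t_{2m+1}$ to be the successors of $s_m$. The paper simply asserts $t_m\le s_m$ when invoking Lemma~\ref{lemma1}, whereas you supply the (easy but necessary) justification via the case analysis on $s_m\bot t_m$ and $s_m<t_m$; your additional verification that $T$ really is order-isomorphic to $\{0,1\}^{<\aleph_0}$ is likewise a detail the paper leaves to the reader.
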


\begin{proof}
Let $t_1=\emptyset$. By Lemma \ref{lemma1}, there exists a node
$s_1\in\mathcal{D}$, with $t_1\le s_1$ such that
$|\Delta_{s_1\cup\{0\}}|=|\Delta_{s_1\cup\{1\}}|=\kappa^+$. We set
$t_2=s_1\cup\{0\}$ and $t_3=s_1\cup\{1\}$. Then $t_2,t_3$ are the
successors of $t_1$ in $T$ and they are the successors of $s_1$ when
we look at the tree $\mathcal{D}$.

Applying Lemma \ref{lemma1} to the family
$\Delta_{s_1\cup\{0\}}=\Delta_{t_2}$ we find a node
$s_2\in\mathcal{D}$, with $t_2\le s_2$, such that
$|\Delta_{s_2\cup\{0\}}|=|\Delta_{s_2\cup\{1\}}|=\kappa^+$. Then the
successors of $t_2$ in $T$ are the nodes $t_4=s_2\cup\{0\}$ and
$t_5=s_2\cup\{1\}$. We continue in the obvious manner.
\end{proof}

\begin{proof}[Proof of Theorem \ref{th.basiko}]
Assume that $\Delta\subseteq \Gamma$ is a set of branches with
$|\Delta|=\kappa^+$ and $\Delta^\ast=\{B^\ast \mid B\in\Delta\}$ is
equivalent to the usual $\ell_1(\kappa^+)$-basis. Then there exists
a constant $\delta>0$ such that for any $n\in\mathbb{N}$, any
$B_1,\ldots,B_n\in\Delta$ and any scalars $a_1,\ldots,a_n$,
$$\delta\sum_{i=1}^n\abs{a_i} \le \Big\|\sum_{i=1}^n a_iB_i^\ast
\Big\| \le \sum_{i=1}^n\abs{a_i} .$$

Let $T$ be the countable subtree of $\mathcal{D}$ given by Lemma
\ref{lemma2} and let $n\in \mathbb{N}$ be any positive integer. Then
we choose branches $B_1,\ldots,B_n$ and $B_{n+1},\ldots,B_{2n}$
belonging to $\Delta$ as follows. We work at the $n$-th level of $T$
which consists of the nodes $t_{2^n}, t_{2^n+1},t_{2^n+2},\ldots,
t_{2^{n+1}-1}$. If we consider the pair $t_{2^n}, t_{2^n+1}$, the
construction of the tree $T$ implies that these nodes are the
successors of some node of the tree $\mathcal{D}$. Therefore they
belong to the same level of $\mathcal{D}$, say the level $\xi_1$.
Similarly the nodes $t_{2^n+2}, t_{2^n+3}$ are placed on the same
level of $\mathcal{D}$, say $\xi_2$, and so on. Finally, let
$\xi_{2^{n-1}}=lev(t_{2^{n+1}-2})=lev(t_{2^{n+1}-1})$. We may
assume, without loss of generality, that $\xi_1=\max\{\xi_k \mid
1\le k\le 2^{n-1}\}$. Then we choose branches $B_1$ and $B_{n+1}$ of
the family $\Delta$ such that $B_1$ passes through $t_{2^n}$ and
$B_{n+1}$ passes through $t_{2^n+1}$ (such branches exist by Lemma
\ref{lemma2}). If $\psi_1$ denotes the immediate predecessor (on the
tree $\mathcal{D}$) of the nodes $t_{2^n}, t_{2^n+1}$, then the
branches $B_1,B_{n+1}$ coincide up to the level of $\psi_1$ and they
separate each other at the next level.

The nodes $t_{2^n}, t_{2^n+1}$ are followers of the node $t_2$ in
the tree $T$. We now forget the followers of $t_2$ and we repeat the
previous procedure to the nodes belonging to the $n$-th level of $T$
which are followers of $t_3$. That is, we detect the pair, say
$t_{2^n+2k}, t_{2^n+2k+1}$, which is placed on the greatest level of
$\mathcal{D}$ (if this is not unique, we simply choose one). Then we
choose branches $B_2,B_{n+2}$ belonging to $\Delta$ such that $B_2$
passes through the left-hand node of the pair, i.e. the node
$t_{2^n+2k}$, and $B_{n+2}$ passes through the right-hand node
$t_{2^n+2k+1}$. Let $\psi_2$ denote the immediate predecessor of
$t_{2^n+2k}, t_{2^n+2k+1}$ on the tree $\mathcal{D}$. Then
$lev(\psi_1)\ge lev(\psi_2)$. The branches $B_2,B_{n+2}$ coincide up
to the level of $\psi_2$. We also notice that the branches $B_1,B_2$
separate each other before the level of $t_2,t_3$ and this happens
for the branches $B_{n+1},B_{n+2}$. The nodes $t_{2^n+2k},
t_{2^n+2k+1}$ are followers either of $t_6$ or $t_7$. If $t_6$ is a
predecessor of $t_{2^n+2k}, t_{2^n+2k+1}$, then we forget the
followers of $t_6$ and we continue with the nodes belonging to the
$n$-th level of $T$ which are followers of $t_7$.

After $n-1$ iterated applications of the previous argument, we find
branches $B_1,\ldots,B_{n-1}$ and $B_{n+1},\ldots, B_{2n-1}$ of the
family $\Delta$ and nodes $\psi_1,\ldots,\psi_{n-1}$ of
$\mathcal{D}$. At this stage only one pair of nodes on the $n$-th
level of $T$ has been left. Let $\psi_n$ be the immediate
predecessor on $\mathcal{D}$ of these nodes. We choose
$B_n,B_{2n}\in\Delta$ such that $B_n$ passes through the left-hand
node and $B_{2n}$ passes through the right-hand node.

Now we observe that the branches $B_1,\ldots,B_n$ are pairwise
disjoint below the level of $\psi_n$ and this is also true for the
branches $B_{n+1},\ldots, B_{2n}$. Therefore, if
$\eta_1=lev(\psi_n)$ and $\eta_2=lev(\psi_1)$, then the following
hold.

\begin{enumerate}
    \item All segments $B_i\cap\{s\mid lev(s)\ge \eta_2+1\}$,
    $i=1,2,\ldots,2n$, are pairwise disjoint.
    \item The segments $B_i\cap\{s\mid \eta_1+1\le lev(s)\le
    \eta_2\}$ for $i=1,2,\ldots,n$ are pairwise disjoint. Hence they
    are admissible $(\eta_1+1)$-$(\eta_2+1)$ segments. Similarly, $B_i\cap\{s\mid \eta_1+1\le lev(s)\le
    \eta_2\}$, $i=n+1,\ldots,2n$, form an admissible family.
    \item $B_i\cap\{s\mid lev(s)\le \eta_1\}=B_{n+i}\cap\{s\mid lev(s)\le
    \eta_1\}$ for any $i=1,2,\ldots,n$. Let us also denote $S_i=B_i\cap\{s\mid lev(s)\le
    \eta_1\}$.
\end{enumerate}

After the choice of $(B_i)_{i=1}^{2n}$ has been completed, our next
purpose is to estimate the norm of the functional
$\sum_{i=1}^{2n}a_iB_i^\ast$ for any scalars $a_1,\ldots,a_{2n}$ and
to contradict the assumption that $\Delta^\ast$ is equivalent to the
usual $\ell_1(\kappa^+)$-basis. For this reason, we consider a
finitely supported vector $x=\sum_{s\in\mathcal{D}}\lambda_se_s\in
X_\kappa$ with $\|x\|\le 1$. We can write $x=x_1+x_2+x_3$, where
$x_1=\sum_{lev(s)\leq \eta_1}\lambda_se_s$, $x_2=\sum_{\eta_1+1\le
lev(s)\leq \eta_2}\lambda_se_s$ and $x_3=\sum_{\eta_2+1\le lev(s)}
\lambda_se_s$. Clearly, $\|x_j\|\leq \|x\|=1$ for any $j=1,2,3$.
Then
$$ \Big|\sum_{i=1}^{2n}a_iB_i^\ast(x)\Big| \le
\Big|\sum_{i=1}^{2n}a_iB_i^\ast(x_1)\Big| +
\Big|\sum_{i=1}^{2n}a_iB_i^\ast(x_2)\Big| +
\Big|\sum_{i=1}^{2n}a_iB_i^\ast(x_3)\Big|.$$ Now we have,

\begin{align*}
    \Big|\sum_{i=1}^{2n}a_iB_i^\ast(x_3)\Big| & \leq
    \Big(\sum_{i=1}^{2n}a_i^2\Big)^{1/2}
    \Big(\sum_{i=1}^{2n}\abs{B_i^\ast(x_3)}^2\Big)^{1/2} \leq \Big(\sum_{i=1}^{2n}a_i^2\Big)^{1/2}\\
    \Big|\sum_{i=1}^{2n}a_iB_i^\ast(x_2)\Big| & \leq
    \Big(\sum_{i=1}^{2n}a_i^2\Big)^{1/2}
    \Big(\sum_{i=1}^{n}\abs{B_i^\ast(x_2)}^2+\sum_{i=n+1}^{2n}\abs{B_i^\ast(x_2)}^2\Big)^{1/2}\\
    &\leq \Big(\sum_{i=1}^{2n}a_i^2\Big)^{1/2} (2\|x_2\|^2)^{1/2} \leq \sqrt{2}\Big(\sum_{i=1}^{2n}a_i^2\Big)^{1/2}
\end{align*}

\begin{align*}
    \Big|\sum_{i=1}^{2n}a_iB_i^\ast(x_1)\Big| & = \Big|\sum_{i=1}^{n}(a_iB_i^\ast(x_1)+a_{n+i}B_{n+i}^\ast(x_1))\Big|\\
    &= \Big|\sum_{i=1}^{n}(a_i+a_{n+i})S_i^\ast(x_1)\Big| \leq \sum_{i=1}^{n}\abs{a_i+a_{n+i}} \abs{S_i^\ast(x_1)}\\
    &\leq \sum_{i=1}^{n}\abs{a_i+a_{n+i}}.
\end{align*}
Summarizing the above, for any finitely supported $x\in X_\kappa$
with $\|x\|\le 1$ we have
$$\Big|\sum_{i=1}^{2n}a_iB_i^\ast(x)\Big| \leq (\sqrt{2}+1)
\Big(\sum_{i=1}^{2n}a_i^2\Big)^{1/2} +
\sum_{i=1}^{n}\abs{a_i+a_{n+i}}.$$
 Therefore, $\|\sum_{i=1}^{2n}a_iB_i^\ast\|\le (\sqrt{2}+1)
(\sum_{i=1}^{2n}a_i^2)^{1/2} + \sum_{i=1}^{n}\abs{a_i+a_{n+i}}$. On
the other hand, $\Delta^\ast$ is equivalent to the usual
$\ell_1(\kappa^+)$-basis. It follows that
$$\delta \sum_{i=1}^{2n}\abs{a_i} \le (\sqrt{2}+1)
\Big(\sum_{i=1}^{2n}a_i^2\Big)^{1/2} +
\sum_{i=1}^{n}\abs{a_i+a_{n+i}}.$$
 If we choose $a_1=\ldots=a_n=1$ and $a_{n+1}=\ldots=a_{2n}=-1$,
then we obtain $\delta\le \frac{\sqrt{2}+1}{\sqrt{2n}}$ for any
$n\in\mathbb{N}$ and we reach a contradiction.
\end{proof}

\section{The non-separable version of Rosenthal's $\ell_1$-theorem}\label{sec.Rosenthal}
In this section, we show that we can not achieve a satisfactory
extension of Rosenthal's $\ell_1$-theorem to spaces of the type
$\ell_1(\kappa)$, for $\kappa$ an uncountable cardinal. As it was
mentioned in the introduction, this extension is possible in only
one case, namely when both $\kappa$ and $cf(\kappa)$ are strong
limit cardinals. For the proof of this result we refer to
\cite{Gryll} and we shall discuss the other cases.

Suppose first that $\kappa$ is not a strong limit cardinal. This
means that there exists a cardinal $\lambda<\kappa$ with $\kappa\le
2^\lambda$. We now consider the space $X_\lambda$ and the
corresponding family of functionals $\Gamma^\ast\subset
X_\lambda^\ast$. Then, $\Gamma^\ast$ is a bounded subset of
$X_\lambda^\ast$ whose cardinality is equal to $2^\lambda\ge\kappa$.
Further, by Corollary \ref{cor.no weakly cauchy}, the set
$\Gamma^\ast$ contains no weakly Cauchy sequence and, by Theorem
\ref{th.basiko}, no subset of $\Gamma^\ast$ is equivalent to the
usual $\ell_1(\kappa)$-basis.

We next consider the case where $\kappa$ is strong limit but
$cf(\kappa)$ is not a strong limit cardinal. This case is not so
simple as the previous one, however it is essentially based on the
arguments developed in Section \ref{sec.Main Results}.

Since $cf(\kappa)$ is not strong limit, there exists a cardinal
$\lambda<cf(\kappa)$ with $cf(\kappa)\le 2^\lambda$. By the
definition of $cf(\kappa)$, there are cardinals $\{\kappa_i\mid
i<cf(\kappa)\}$ such that $\kappa_i<\kappa$, for any ordinal
$i<cf(\kappa)$, and $\kappa=\sum_{i<cf(\kappa)}\kappa_i$. We next
consider the space $X_\kappa$ and we choose a family of branches
$A\subset \Gamma$ as follows. We focus on the level $\lambda$ of the
tree $\mathcal{D}$. This level consists of the nodes
$\{0,1\}^\lambda=\{(a_\xi)_{\xi<\lambda}\mid a_\xi=0 ~\text{or}~
1\}$. Therefore, there are $2^\lambda$ nodes on the level $\lambda$.
Since $cf(\kappa)\le 2^\lambda$, we can choose nodes $\{t_i\mid
i<cf(\kappa)\}$ on the level $\lambda$ with $t_i\neq t_j$ provided
that $i\neq j$. Now we observe that for any $i<cf(\kappa)$, the set
of all branches passing through the node $t_i$ has cardinality
$2^\kappa$. Hence, for any $i<cf(\kappa)$, we can choose a family of
branches $A_i\subset \Gamma$ such that $|A_i|=\kappa_i$ and each
branch belonging to $A_i$ passes through the node $t_i$. Finally,
let $A=\cup_{i<cf(\kappa)} A_i$ and let $A^\ast$ be the family of
the corresponding functionals, that is $A^\ast=\{B^\ast \mid B\in
A\}$.

Clearly, the choice of the family $A$ implies that
$|A^\ast|=|A|=\sum_{i<cf(\kappa)} \kappa_i=\kappa$. Furthermore, by
Corollary \ref{cor.no weakly cauchy}, $A^\ast$ contains no weakly
Cauchy sequence. So, it remains to show that no subset of $A^\ast$
is equivalent to the usual $\ell_1(\kappa)$-basis. The proof follows
the lines of the proof of Theorem \ref{th.basiko}. We describe
briefly the corresponding of Lemma \ref{lemma1}.

\begin{lemma}\label{lemma1a}
Let $\Delta$ be a subset of $A$ with $|\Delta|=\kappa$. Then there
exists a node $s\in\mathcal{D}$ such that $lev(s)<\lambda$ and
$|\Delta_{s\cup\{0\}}|=|\Delta_{s\cup\{1\}}|=\kappa$. (Recall that
$\Delta_s=\{B\in \Delta\mid s\in B\}$.)
\end{lemma}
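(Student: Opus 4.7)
I would argue by contradiction, following the scheme of Lemma \ref{lemma1} but exploiting the extra information that every branch in $A$ meets level $\lambda$ at one of the nodes $\{t_i \mid i<cf(\kappa)\}$. Suppose the conclusion fails, so that for every node $s\in\mathcal{D}$ with $lev(s)<\lambda$ at least one successor $s\cup\{\epsilon\}$ satisfies $|\Delta_{s\cup\{\epsilon\}}|<\kappa$. Because $\kappa$ is an infinite cardinal and $\Delta_s=\Delta_{s\cup\{0\}}\cup\Delta_{s\cup\{1\}}$, whenever $|\Delta_s|=\kappa$ the other successor of $s$ must carry cardinality exactly $\kappa$.

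Using this I would construct by transfinite recursion a chain $(s_\eta)_{\eta\le\lambda}$ of nodes of $\mathcal{D}$ with $lev(s_\eta)=\eta$, $s_\xi<s_\eta$ for $\xi<\eta$, and $|\Delta_{s_\eta}|=\kappa$ for every $\eta\le\lambda$. Start with $s_0=\emptyset$. At a successor step $\eta=\eta_0+1\le\lambda$ the previous paragraph produces a successor of $s_{\eta_0}$ carrying $\kappa$ branches of $\Delta$, and one sets $s_\eta$ to be that successor. At a limit step $\eta\le\lambda$ put $s_\eta=\bigcup_{\xi<\eta}s_\xi$, and repeat the argument from Lemma \ref{lemma1}: any branch $B\in\Delta$ failing to pass through $s_\eta$ lies in $\Delta_{s_\xi}\cap\Delta_{s_{\xi+1}}^c$ for some $\xi<\eta$, and by the contradiction hypothesis applied at $s_\xi$ each of these intersections has cardinality $<\kappa$. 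The number of terms in the resulting union is at most $|\eta|\le\lambda<cf(\kappa)$, and a union of fewer than $cf(\kappa)$ sets each of cardinality $<\kappa$ has cardinality $<\kappa$ by the definition of $cf(\kappa)$. Hence $|\Delta_{s_\eta}^c|<\kappa$, and therefore $|\Delta_{s_\eta}|=\kappa$.

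Finally, I would contradict the construction at $\eta=\lambda$. The node $s_\lambda$ lies on level $\lambda$, and every branch in $\Delta\subseteq A$ meets level $\lambda$ at exactly one $t_i$. Thus either $s_\lambda\ne t_i$ for all $i<cf(\kappa)$, in which case $\Delta_{s_\lambda}=\emptyset$; or $s_\lambda=t_{i_0}$ for some $i_0$, in which case $\Delta_{s_\lambda}\subseteq A_{i_0}$ and so $|\Delta_{s_\lambda}|\le\kappa_{i_0}<\kappa$. Either case contradicts $|\Delta_{s_\lambda}|=\kappa$, which proves the lemma. The main subtlety lies in the limit step: the cardinal arithmetic goes through not merely because $\lambda<\kappa$ but because $\lambda<cf(\kappa)$, and this is precisely the property of $\lambda$ that motivated its choice when setting up the family $A$.
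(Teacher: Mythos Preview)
Your argument is correct and follows essentially the same route as the paper: assume the conclusion fails, build by transfinite recursion a chain $(s_\eta)_{\eta\le\lambda}$ with $|\Delta_{s_\eta}|=\kappa$ (using $\eta<\lambda<cf(\kappa)$ at limit stages to bound $|\Delta_{s_\eta}^c|$), and then observe that $|\Delta_{s_\lambda}|=\kappa$ contradicts the fact that every branch of $A$ meets level $\lambda$ in some $t_i$ and $|A_i|=\kappa_i<\kappa$. Your final case distinction ($s_\lambda\ne t_i$ for all $i$ versus $s_\lambda=t_{i_0}$) simply spells out what the paper summarizes as ``the choice of $A$ indicates that $|\Delta_s|<\kappa$ for any node $s$ on the level $\lambda$.''
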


\begin{proof}
Assuming that the assertion is not true, we construct an initial
segment $S=\{s_\eta\}_{\eta<\lambda}=\{s_0<s_1<\ldots\}$ such that
$|\Delta_{s_\eta}|=\kappa$ for any $\eta<\lambda$. We start with
$s_0=\emptyset$. If $\eta=\eta_0+1$, then $s_\eta$ is one of the
followers of $s_{\eta_0}$. If $\eta$ is a limit ordinal, then we set
$s_\eta=\cup_{\xi<\eta}s_\xi$. Clearly, $s_\eta$ is a node on the
$\eta$-th level of $\mathcal{D}$. We next show that
$$\Delta_{s_\eta}^c=\cup_{\xi<\eta}(\Delta_{s_\xi}\cap \Delta_{s_{\xi+1}}^c).$$
Therefore, $|\Delta_{s_\eta}^c|=\sum_{\xi<\eta}|\Delta_{s_\xi}\cap
\Delta_{s_{\xi+1}}^c|<\kappa$, since $|\Delta_{s_\xi}\cap
\Delta_{s_{\xi+1}}^c|<\kappa$ and $\eta<\lambda<cf(\kappa)$. Hence
$|\Delta_{s_\eta}|=\kappa$ and this completes the construction of
$S$.

Finally, we set $s_\lambda=\cup_{\xi<\lambda}s_\xi$. Then
$s_\lambda$ belongs to the level $\lambda$ and as previously we show
$|\Delta_{s_\lambda}|=\kappa$. However, the choice of $A$ indicates
that $|\Delta_s|<\kappa$ for any node $s$ on the level $\lambda$ and
we have reached a contradiction.
\end{proof}

Using Lemma \ref{lemma1a}, we construct a countable subtree
$T=\{t_1,t_2,t_3,\ldots\}$ of $\mathcal{D}$ such that:
\begin{enumerate}
    \item $|\Delta_{t_m}|=\kappa$ for any $m=1,2,\ldots$ (therefore,
    $lev(t_m)<\lambda$);
    \item the successors $t_{2m},t_{2m+1}$ of the node $t_m$ are the
    successors of some node $s_m\in\mathcal{D}$.
\end{enumerate}
Finally, we repeat the proof of Theorem \ref{th.basiko} to show that
no subset $\Delta^\ast$ of $A^\ast$ is equivalent to the usual
$\ell_1(\kappa)$-basis.

\section{The structure of the subspaces of
$X_\kappa$}\label{sec.structure} The structure of the subspaces of
the James Tree space ($JT$) and the Hagler Tree space ($HT$) has
been studied extensively, since it has provided answers to several
questions about Banach spaces. By analogy, the structure of the
subspaces of $X_\kappa$ seems quite interesting. This section is
devoted to some remarks concerning this issue.

First of all, $X_\kappa$ contains a lot of subspaces isomorphic to
$c_0(\kappa)$. Indeed, let $B=\{s_\eta\}_{\eta<\kappa}$ be any
branch and, for any $\eta<\kappa$, let $t_\eta$ be the successor of
$s_\eta$ with $t_\eta\neq s_{\eta+1}$. Then $\{t_\eta \mid
\eta<\kappa\}$ is a strongly incomparable family of nodes. By
Proposition \ref{prop.c0}, it follows that the subspace
$\overline{span}\{e_{t_\eta}\mid \eta<\kappa\}$ is isomorphic to
$c_0(\kappa)$. Furthermore, it is easy to verify that for any
ordinal $\eta<\kappa$ the subspace $\overline{span}\{e_s\mid
s\in\{0,1\}^\eta\}$ is isometrically isomorphic to the space
$\ell_2(2^\eta)$. The main properties of the spaces $JT$ and $HT$
suggest now the following problem about the subspaces of $X_\kappa$.

\begin{problem}
Is it true that there exists no subspace of $X_\kappa$ isomorphic to
$\ell_1(\kappa)$?
\end{problem}

Concerning the above problem, we prove a partial result. Assume that
$B=\{s_\eta\}_{\eta<\kappa}$ is any branch of the tree
$\mathcal{D}$. Then we show that the subspace generated by this
branch, that is the subspace
$\overline{span}\{e_{s_\eta}\}_{\eta<\kappa}$, does not contain any
copy of $\ell_1(\kappa)$.

For our convenience, we first define a Banach space isometrically
isomorphic to the subspace generated by any branch. Let $\kappa$ be
an infinite cardinal. We consider the vector space
$c_{00}(\{\eta\mid\eta<\kappa\})$ consisting of all finitely
supported functions $x:\{\eta \mid \eta<\kappa\}\to\mathbb{R}$. For
any $x\in c_{00}(\{\eta\mid\eta<\kappa\})$, we set
$$\|x\|=\sup\{\abs{S^\ast(x)}\}$$
where the supremum is taken over all segments
$S\subseteq\{\eta\mid\eta<\kappa\}$. If $E_\kappa$ denotes the
completion of the normed space we have just defined, then $E_\kappa$
is isometrically isomorphic to the subspace of $X_\kappa$ generated
by any branch.

As usual, for any ordinal $\eta<\kappa$, we consider the vector
$e_\eta\in E_\kappa$ with $e_\eta(\xi)=1$ if $\xi=\eta$ and
$e_\eta(\xi)=0$ otherwise. We now define some projections on the
space $E_\kappa$. Let $\eta$ be any ordinal, $\eta<\kappa$. We
define $P_\eta:span\{e_\xi\}_{\xi<\kappa}\to
span\{e_\xi\}_{\xi<\eta}$ as follows: if
$x=\sum_{\xi<\kappa}x(\xi)e_\xi$ is finitely supported, then
$P_\eta(x)=\sum_{\xi<\eta}x(\xi)e_\xi$. Clearly, $P_\eta$ is a
linear projection with $\|P_\eta\|=1$. We can also extend $P_\eta$
continuously and we obtain a projection $P_\eta:E_\kappa\to
E_\kappa$ onto $\overline{span}\{e_\xi\}_{\xi<\eta}$ with
$\|P_\eta\|=1$. We next prove the following.

\begin{proposition}\label{prop.sbspa}
The space $E_\kappa$ does not contain any isomorphic copy of
$\ell_1(\kappa)$.
\end{proposition}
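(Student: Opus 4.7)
I would argue by contradiction, in analogy with the scheme of Theorem \ref{th.basiko} but adapted to the linearly-ordered setting of $E_\kappa$. Suppose $(f_i)_{i<\kappa}\subseteq E_\kappa$ is equivalent to the usual $\ell_1(\kappa)$-basis with constant $\delta>0$. Since $c_{00}(\{\eta<\kappa\})$ is dense in $E_\kappa$, each $f_i$ may be replaced by a finitely supported perturbation with an arbitrarily small loss in the $\ell_1$-constant, so one may assume each $f_i\in c_{00}$. The case $\kappa=\omega$ is immediate because the segment norm on $\{n<\omega\}$ is equivalent to the summing-basis norm on $c_0$, so $E_\omega$ is isomorphic to $c_0$ and contains no $\ell_1$; the main case is $\kappa\ge\aleph_1$.

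The heart of the argument is the extraction of an $\omega$-subsequence $(f_{i_n})_{n<\omega}$ with block-like structure. Using the classical $\Delta$-system (sunflower) lemma I first thin to an uncountable subfamily whose supports decompose as $\mathrm{supp}(f_i)=R\sqcup P_i$ with a fixed finite root $R$ and pairwise disjoint petals $(P_i)$. Further pigeonhole/Bolzano--Weierstrass refinements along an $\omega$-subsequence give: (i) $f_{i_n}|_R$ converges in the finite-dimensional space $\mathrm{span}(e_\xi:\xi\in R)$ (the projection $P_R$ being bounded since $R$ is finite); (ii) the total sums $\tau_n=\sum_\xi f_{i_n}(\xi)$ converge in $\mathbb{R}$; and (iii) the petals are \emph{ordinal-separated}: $\sup P_{i_n}<\inf P_{i_{n+1}}$ for each $n$. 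Property (iii) is arranged inductively, using that $|\{i:P_i\cap[0,\alpha]\neq\emptyset\}|\le|\alpha|+1$ for pairwise disjoint petals, so within any regular uncountable initial segment of $\kappa$ the next petal can always be chosen above the previous one.

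Set $g_n=f_{i_{2n-1}}-f_{i_{2n}}$ and $h_n=g_n-P_R(g_n)$. Then $(g_n)$ is $\ell_1$-equivalent with constant $2\delta$; (i) forces $\|g_n-h_n\|=\|P_R(g_n)\|\to 0$; and (i) together with (ii) forces the ``block totals'' $\sigma(h_n)=\sum_\xi h_n(\xi)\to 0$. Each $h_n$ is supported in the double-petal $B_n=P_{i_{2n-1}}\sqcup P_{i_{2n}}$, and by (iii) the $B_n$'s are pairwise disjoint and ordinal-separated in $\kappa$. Consequently any segment $S$ of $\kappa$ fully contains all but at most two of them, and a direct segment-wise estimate yields
$$\Bigl\|\sum_{n=1}^N \epsilon_n h_n\Bigr\|_{E_\kappa}\le 2\max_n\|h_n\| + N\cdot\max_n|\sigma(h_n)|$$
for any signs $\epsilon_n\in\{\pm 1\}$. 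Combined with the $\ell_1$-lower bound $\|\sum\epsilon_n h_n\|\ge(2\delta-o(1))N$ (inherited from $(g_n)$ via the smallness of $g_n-h_n$), this forces $N$ to be uniformly bounded, contradicting that $N$ can be taken arbitrarily large.

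The main obstacle is the combinatorial extraction in the second paragraph when $\kappa$ is a singular cardinal of countable cofinality (for example $\kappa=\aleph_\omega$), where a $\Delta$-subfamily of size $\kappa$ need not exist and disjoint petals may spread across cofinally many ``levels'' of $\kappa$. The fix is a preliminary pigeonhole on the cardinal level of each support (every finite subset of $\aleph_\omega$ lies inside some $\aleph_n$), which confines a sufficiently large subfamily to a regular-cardinal initial segment $[0,\aleph_{n+1})$ of $\kappa$, after which the $\Delta$-system lemma and the ordinal-separation argument of property (iii) apply without modification.
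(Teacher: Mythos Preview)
Your route differs substantially from the paper's, and it carries a real gap. The paper avoids the $\Delta$-system lemma and any cardinal-by-cardinal case analysis: it builds a block sequence of \emph{linear combinations} of the $x_\xi$'s by a pure dimension count. Having set $y_0=x_0$ and $\xi_1=\max\mathrm{supp}(y_0)+1<\kappa$, the projection $P_{\xi_1}$ sends the $\kappa$-dimensional space $\mathrm{span}\{x_\xi:\xi\ge 1\}$ into $\mathrm{span}\{e_\xi:\xi<\xi_1\}$, whose dimension is $|\xi_1|<\kappa$; the kernel is therefore nonzero, yielding a normalized $y_1$ supported entirely above $\xi_1$. Iterating produces successively supported $(y_m)_{m<\omega}$ which, as a block sequence of an $\ell_1$-family, is itself $\ell_1$-equivalent, yet lives in a subspace isometric to $E_{\aleph_0}\cong c_0$ --- contradiction. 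This works uniformly for every infinite cardinal $\kappa$.

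The problem in your argument is property~(iii). The bound $|\{i:P_i\cap[0,\alpha]\neq\emptyset\}|\le|\alpha|+1$ only allows the induction to continue when the number of available petals exceeds $|\alpha|+1$; for that you need $\mu$ petals confined to $[0,\mu)$ for some regular uncountable $\mu$. For regular $\kappa$ one may take $\mu=\kappa$ (the $\Delta$-system lemma indeed returns $\kappa$ petals), but for singular $\kappa$ this is unavailable, and your pigeonhole patch does not deliver it: when $\kappa=\aleph_\omega$, linear independence forces at most $\aleph_n$ supports inside $[0,\aleph_n)$, and it is perfectly consistent that \emph{exactly} about $\aleph_{n-1}$ supports lie in each $[0,\aleph_n)$, so no regular level captures its own cardinality's worth. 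Concretely, from merely $\aleph_1$ disjoint petals in a larger ordinal one need not find any two that are ordinal-separated --- take $P_i=\{i,\omega_1+i\}$ for $i<\omega_1$. Singular $\kappa$ of uncountable cofinality is not addressed at all. The paper's projection-kernel device bypasses all of this precisely because it manufactures vectors supported above a given level as \emph{combinations} of the $f_i$'s, rather than attempting to locate individual $f_i$'s (or their petals) lying high up.
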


\begin{proof}
Suppose, on the contrary, that $\ell_1(\kappa)$ embeds
isomorphically into $E_\kappa$. Then we find a subset $\{x_\xi\mid
\xi<\kappa\}$ of $E_\kappa$ which is equivalent to the usual
$\ell_1(\kappa)$-basis. Without loss of generality, we may assume
that $x_\xi$ is finitely supported and $\|x_\xi\|=1$ for any
$\xi<\kappa$.

We inductively construct a sequence $(y_m)_{m=0}^\infty$ belonging
to $span\{e_\xi\}_{\xi<\kappa}$ with the following properties:
\begin{enumerate}
    \item $\|y_m\|=1$ for each $m$;
    \item if $A_m\subset\{\xi<\kappa\}$ is the support of $y_m$ then
    $\max A_m<\min A_{m+1}$ for any $m$;
    \item $(y_m)_{m=0}^\infty$ is a block sequence of
    $(x_\xi)_{\xi<\kappa}$, that is there are ordinals
    $\eta_0<\eta_1<\ldots$ so that $y_m\in span\{x_\xi \mid
    \eta_m\le \xi < \eta_{m+1}\}$.
\end{enumerate}
We start with $y_0=x_0$, $\eta_0=0$ and $\eta_1=1$. Let $\xi_1=\max
A_0+1$. We claim that there exists $y\in span\{x_\xi\}_{\xi\geq 1}$,
$y\neq 0$, such that $P_{\xi_1}(y)=0$. Indeed, if we assume that
$P_{\xi_1}(y)\neq 0$ for all $y\in span\{x_\xi\}_{\xi\geq 1}$,
$y\neq 0$, then the linear operator $P_{\xi_1}:
span\{x_\xi\}_{\xi\geq 1}\to span\{e_\xi\}_{\xi<\xi_1}$ is
one-to-one. Since $\{x_\xi\}_{\xi\geq 1}$ are linearly independent,
it follows that the (algebraic) dimension of the vector space
$span\{e_\xi\}_{\xi<\xi_1}$ is equal to $\kappa$, which is a
contradiction. Therefore, there is $y\in span\{x_\xi\}_{\xi\geq 1}$
such that $y\neq 0$ and $P_{\xi_1}(y)=0$. We set $y_1=y/\|y\|$.
Since $P_{\xi_1}(y)=0$, we have $\max A_0<\min A_1$. Moreover, we
can choose an ordinal $\eta_2>\eta_1$ such that $y\in span\{x_\xi
\mid \eta_1\leq \xi <\eta_2\}$. Applying repeatedly the previous
argument, we construct the desired sequence $(y_m)_{m=0}^\infty$.

Since $(x_\xi)_{\xi<\kappa}$ is equivalent to the usual
$\ell_1(\kappa)$-basis, it is easy to verify that the sequence
$(y_m)$ is equivalent to the usual $\ell_1$-basis. Furthermore, the
sequence $(y_m)$ belongs to $span\{e_\xi \mid \xi \in
\cup_{m=0}^\infty A_m\}$. The latter space is isometrically
isomorphic to $E_{\aleph_0}$, which in turn is isomorphic to $c_0$
(see \cite{Hagler}). That is, in a space isomorphic to $c_0$ we find
a copy of $\ell_1$, which is a contradiction.
\end{proof}

\bibliographystyle{amsplain}

\end{document}